\documentclass[12pt]{amsart}
\usepackage[utf8]{inputenc}
\usepackage[T1]{fontenc}
\usepackage{amsfonts}
\usepackage{amsmath,amscd}
\usepackage{amsthm}
\usepackage{latexsym}
\usepackage{amssymb}
\usepackage{enumerate}
\usepackage{url}
\markleft{\hfill G. Raposo\hfill }
\usepackage{hyperref}

\newtheorem{theorem}{Theorem}[section]
\newtheorem*{theorem*}{Theorem}
\newtheorem{proposition}[theorem]{Proposition}

\newtheorem{definition}[theorem]{Definition}

\newtheorem{lemma}[theorem]{Lemma}
\newtheorem{remark}[theorem]{Remark}
\newtheorem*{claim*}{Claim}
\usepackage{tikz-cd}

\usepackage{mathdots}

\usepackage{geometry}
\usepackage{tikz}
\usetikzlibrary{arrows,matrix,positioning}

\usepackage{caption}
\usepackage{subcaption}
\usepackage{graphicx}

\numberwithin{equation}{section}

\newcommand{\Z}{\mathbb{Z}} 

\allowdisplaybreaks

\title{Circular Rearrangement Inequality and Optimal Cyclic Birth and Death Chains}
\author{Gabriel Raposo}
\address{Department of Mathematics, University of Toronto, \newline 40 St. George Street, Toronto, ON M5S2E4, Canada}
\email{gabo.raposo@utoronto.ca}

\begin{document}

\maketitle

\begin{abstract}
We prove a generalized circular rearrangement inequality: among all circular arrangements of a finite collection of positive numbers, the greedy arrangement simultaneously maximizes the sums of products of $k$ consecutive entries for every $k$. This resolves a 2023 conjecture of Holmes--Holroyd--Ramírez and extends the classical circular rearrangement inequality for products of adjacent entries. As an application, we show that the greedy ordering minimizes the speed of a cyclic birth-and-death chain, resolving another conjecture of the same authors. \end{abstract}


\section{Introduction}

We consider a random walk on $\Z$ with transition probabilities $\big(p_{i,i+1}\big)_{i\in \Z}\in (0,1)^\Z$ satisfying $p_{i,i+1}=p_{i\,\textup{mod } n}$ for some integer $n\geq 1$ and $\vec{p}=(p_1,\dots,p_{n})$. These models of \textit{random walks in periodic environments} have been studied in higher dimensions \cite{Tak02,KU08,RV10}. In the one dimensional setting we refer to such random walks as \textit{cyclic birth and death chains}. In this setting a simple criterion for transience is given by means of the quantity
\begin{equation*}
    \gamma:=\prod_{i=1}^{n} \frac{1-p_i}{p_i}.
\end{equation*}
\begin{proposition}[\cite{HHR23}]
Let $X$ be a cyclic birth and death chain, then we define the speed of $X$ to be the limit
\begin{equation*}
    s(\vec{p}):=\lim_{m\to \infty} \frac{X_m}{m}.
\end{equation*}
It exists almost surely and is deterministic. Moreover, $s(\vec{p})>0$ if and only if $\gamma<1$, $s(\vec{p})<0$ if and only if $\gamma>1$, while $s(\vec{p})=0$ and $X$ is recurrent if and only if $\gamma=1$. 
\end{proposition}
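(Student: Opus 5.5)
We will prove the proposition with the classical electrical-network approach for birth and death chains: hitting times, a renewal decomposition, and the strong law of large numbers. Write $q_i=1-p_i$ and $\rho_i=q_i/p_i$, all indices taken mod $n$, so that $\gamma=\prod_{i=1}^n\rho_i$.

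\textbf{Step 1 (recurrence/transience).} For a nearest-neighbour chain on $\Z$ we introduce the scale function with increments $\pi_j=\prod_{i=1}^{j}\rho_i$ for $j\ge 0$, extended to negative $j$ in the analogous way. By periodicity, $\sum_{j\ge0}\pi_j$ is a geometric series with ratio $\gamma$ summed over blocks of length $n$; it is finite iff $\gamma<1$. Likewise $\sum_{j<0}\pi_j$ is finite iff $\gamma>1$. The standard gambler's ruin formula gives
\[
P_0(\text{hit } b \text{ before } -a)=\frac{\sum_{j=-a}^{-1}\pi_j}{\sum_{j=-a}^{b-1}\pi_j}.
\]
Letting $a,b\to\infty$ yields the following. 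If $\gamma<1$, then $X_m\to+\infty$ a.s. If $\gamma>1$, then $X_m\to-\infty$ a.s. If $\gamma=1$, both sums diverge, so the chain returns to $0$ a.s.; it is then recurrent, since irreducibility holds because all $p_i\in(0,1)$.

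\textbf{Step 2 (speed when $\gamma<1$).} Let $T_k$ be the hitting time of $k$, and set $\tau_j=T_{jn}-T_{(j-1)n}$ for $j\ge1$. By the strong Markov property and periodicity of the environment, the $\tau_j$ are i.i.d. Transience to the right makes them finite a.s. We compute $E[\tau_1]$ by the usual recursion. Let $e_i=E_i[T_{i+1}]$; then $e_i=1+q_i(e_{i-1}+e_i)$, i.e.\ $e_i=\frac{1}{p_i}+\rho_ie_{i-1}$. Iterating backwards gives
\[
e_i=\sum_{k\ge 0}\frac{1}{p_{i-k}}\prod_{l=0}^{k-1}\rho_{i-l}.
\]
By periodicity this is a geometric series in $\gamma$, hence finite and positive when $\gamma<1$. (Finiteness must be justified, e.g.\ by truncating at a left barrier and using monotone convergence.) Then the SLLN gives $T_{jn}/j\to E\tau_1=\sum_{i=1}^n e_i<\infty$ a.s. A sandwich argument then shows $X_m/m\to n/\sum_i e_i>0$; it uses that $X_m\le$ the running maximum, together with the fact that backtracking below the current maximum is controlled. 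The case $\gamma>1$ follows by the reflection $x\mapsto -x$, which replaces $p_i$ by $q$'s and $\gamma$ by $1/\gamma$; this gives $s<0$.

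\textbf{Step 3 (the case $\gamma=1$).} We expect this to be the main obstacle. Recurrence gives the chain no drift direction, but we still need $X_m/m\to0$. Here $E[T_1]=\infty$, since the series for $e_i$ diverges when $\gamma=1$. Applying the same renewal argument, with $T_{jn}/j\to\infty$ a.s., shows $\limsup X_m/m\le 0$; the reflected chain gives $\liminf\ge0$. Concretely, for any $\epsilon>0$ the time to reach $\epsilon m$ exceeds $m$ eventually. This yields $s=0$. Since the three cases $\gamma<1$, $\gamma>1$, $\gamma=1$ are exhaustive and give $s>0$, $s<0$, $s=0$ respectively, the ``if and only if'' statements follow. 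Determinism of $s$ also follows, since in every case it is an explicit function of $\vec p$.
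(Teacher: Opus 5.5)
Your proof is correct. There is no argument in the paper to compare it with: the proposition is quoted from \cite{HHR23}, and the paper's only hint is the alternative formula $s(\vec p)=\sum_{i=1}^n(2p_i-1)\pi_i$, where $\pi$ is the stationary law of $X_m \bmod n$. I will call it $\mu$ below, to avoid a clash with your scale increments $\pi_j$.

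That hint corresponds to an ergodic route. One writes $X_m=\sum_{t<m}(2p_{X_t}-1)$ plus a bounded-increment martingale, and applies the ergodic theorem to the finite irreducible chain $X_m \bmod n$. This gives existence and determinism of the limit for every $\gamma$ at once, with no hitting times and no infinite-mean law of large numbers. It does not, however, give recurrence. The sign of $s$ also has to be extracted by solving for $\mu$: the current $J=\mu_ip_i-\mu_{i+1}(1-p_{i+1})$ is independent of $i$, $s=nJ$, and going once around the cycle shows that $J$ has the sign of $1-\gamma$.

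Your hitting-time (Solomon-type) argument gets transience/recurrence and the sign together from the scale function. It also gives an explicit speed $n/\sum_i e_i$, and this coincides with the right-hand side of (\ref{eqSpeedy}). Indeed, substituting $1/p_i=1+\rho_i$ into your series gives $e_i=1+\frac{2}{1-\gamma}\sum_{k=1}^n\prod_{l=0}^{k-1}\rho_{i-l}$. Hence $\sum_{i=1}^n e_i=n+\frac{2}{1-\gamma}\sum_{k=1}^n P_k(I_n,\rho)$, so you obtain Theorem \ref{TheoremHHR} as a by-product.

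The one step you only gesture at is the lower bound $\liminf X_m/m\ge n/\sum_i e_i$, and your own Step 1 supplies it. Put $S(m)=\sum_{j\ge m}\pi_j$; periodicity gives $S(m+n)=\gamma S(m)$. Letting $b\to\infty$ in your gambler's-ruin formula then gives, uniformly in $k$ (with $\mathbf{P}_k$ the law of the walk started at $k$),
\[
\mathbf{P}_k(T_{k-\ell}<\infty)=\frac{S(k)}{S(k-\ell)}\le \gamma^{\lfloor \ell/n\rfloor}.
\]
With $\ell=\lceil\delta k\rceil$ these bounds are summable in $k$. By Borel--Cantelli, almost surely for all large $k$ the walk never drops below $(1-\delta)k$ after $T_k$. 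Hence $X_m\ge(1-\delta)\max_{t\le m}X_t$ for all large $m$.

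Step 3 is not the obstacle you anticipated. The only point worth recording is why $e_i=\infty$ when $\gamma=1$. The first-step identity $e_i=1/p_i+\rho_i e_{i-1}$ holds in $[0,\infty]$, so $e_i$ dominates every partial sum of a divergent series. The law of large numbers for nonnegative i.i.d. variables with infinite mean then gives $T_{jn}/j\to\infty$, exactly as you use.
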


We are interested in studying the speed of such Markov chains, for simplicity we restrict ourselves to the case $\gamma<1$ when $s(\vec{p})>0$. In this paper we describe the optimal rearrangement of the transition probabilities $p_1,\dots,p_n$ which minimizes the speed. This question was originally posed in \cite[Conjecture 8]{HHR23}, where an explicit formula for the speed is obtained and the number of distinct possible speeds induced by different rearrangements is described. Related optimization questions for random walks in fixed environments
have been studied in special settings; see \cite{PR12,CMT20}. A crucial object of study are the \textit{circular rearrangement products}. 

\begin{definition}
Let $1\leq k \leq n$ integers, $x_1, x_2, \dots , x_n$ positive real numbers and $\sigma \in S_n$ a permutation, define the $(\sigma,k)$-product to be 
\begin{equation*}
P_k(\sigma,\vec{x})= \sum_{i=1}^n \prod_{j=1}^k x_{\sigma(i+j)}.
\end{equation*}
Here the domain of $\sigma$ is extended on the integers periodically. 
\end{definition}

Note that $P_k(\sigma,\vec{x})$ is invariant under the action of the dihedral group. This fact, jointly with an explicit formula for the speed, allows to compute the number of distinct speeds. Indeed, we have the following.  

\begin{theorem}[\cite{HHR23}]\label{TheoremHHR}
Let $\gamma<1$ and $\rho:=\big(\frac{1-p_1}{p_1},\frac{1-p_2}{p_2},\dots,\frac{1-p_n}{p_n}\big)$, then
\begin{equation}\label{eqSpeedy}
    s(\vec{p})=\frac{1-\gamma}{1-\gamma+\frac{2}{n}\sum_{k=1}^n P_k(I_n,\rho)}.
\end{equation}
Here $I_n$ denotes the identity permutation of the symmetric group $S_n$.
\end{theorem}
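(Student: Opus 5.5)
We derive the speed formula \eqref{eqSpeedy} from the stationary distribution of the environment viewed from the walker. Write $q_i=1-p_i$ and $\rho_i=q_i/p_i$, with indices read mod $n$. The process $Y_m:=X_m \bmod n$ is an irreducible Markov chain on $\Z/n\Z$ with steps $i\to i+1$ with probability $p_i$ and $i\to i-1$ with probability $q_i$. Let $\pi$ be its stationary distribution. By the ergodic theorem applied to the increments $X_{m+1}-X_m$, which depend only on $Y_m$,
\begin{equation*}
s(\vec p)=\lim_{m\to\infty}\frac{X_m}{m}=\sum_{i=1}^n \pi_i\,(p_i-q_i)\quad\text{a.s.}
\end{equation*}
Stationarity of $\pi$ for a cycle chain means the net probability flux across each edge is the same constant:
\begin{equation*}
\pi_i p_i-\pi_{i+1}q_{i+1}=J\quad\text{for all } i.
\end{equation*}
Summing $\pi_i(p_i-q_i)$ over $i$ and reindexing shows that $s=\sum_i(\pi_ip_i-\pi_{i+1}q_{i+1})=nJ$. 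So the whole task is to compute $J$.

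To find $J$, I will solve the linear recursion $\pi_{i+1}=(\pi_ip_i-J)/q_{i+1}$ around the cycle. Equivalently, I will use the standard explicit solution in which $\pi_i$ is proportional to a sum of products of ratios. Unrolling the recursion $n$ steps and imposing periodicity $\pi_{i+n}=\pi_i$ gives
\begin{equation*}
\pi_i\,p_i(1-\gamma)=J\Big(1+\rho_{i+1}+\rho_{i+1}\rho_{i+2}+\dots+\rho_{i+1}\cdots\rho_{i+n-1}\Big).
\end{equation*}
The precise placement of the indices will be checked carefully in the write-up. The point of this identity is that $\pi_i$ is determined by $J$ and by partial products of the $\rho$'s.

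The next step is normalization. I use $1/p_i=1+\rho_i$ and sum $\pi_i=J(1+\rho_i)(1+\rho_{i+1}+\dots)/(1-\gamma)$ over $i$, setting the total equal to $1$. Expanding, the sum over $i$ of the strings of consecutive products of length $k$ yields exactly $P_k(I_n,\rho)$. Each length $k$ with $1\le k\le n-1$ appears twice: once from the $1$ factor times a length-$k$ string, and once from $\rho_i$ times a length-$(k-1)$ string. The top-length terms contribute $n\gamma=P_n(I_n,\rho)$ together with boundary terms. Collecting everything should give
\begin{equation*}
1=\frac{J}{1-\gamma}\Big(n+2\sum_{k=1}^{n}P_k(I_n,\rho)-n\gamma\Big).
\end{equation*}
Hence
\begin{equation*}
s=nJ=\frac{1-\gamma}{1-\gamma+\frac2n\sum_{k=1}^n P_k(I_n,\rho)},
\end{equation*}
which is \eqref{eqSpeedy}.

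The main obstacle is this combinatorial bookkeeping: checking that the boundary terms combine exactly into $n-n\gamma$ plus twice each $P_k$, including $P_n=n\gamma$. I would first verify the identity for $n=1$ and $n=2$, and then do the general expansion by indexing strings by their starting point. The positivity $\gamma<1$ ensures that the solution $\pi$ is positive, so $J>0$, consistent with the proposition from \cite{HHR23}.
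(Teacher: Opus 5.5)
Your argument is correct. The paper gives no proof to compare against: the statement is imported from \cite{HHR23}. The only hint the paper offers is the remark that $s(\vec p)=\sum_{i=1}^n(2p_i-1)\pi_i$ with $\pi$ the stationary law of $X_m \bmod n$, which is exactly your starting point.

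The two steps you hedged on check out.
With $u_i:=\pi_ip_i$ and $q_{i+1}=\rho_{i+1}p_{i+1}$, the constant-flux equation reads $u_i=J+\rho_{i+1}u_{i+1}$. Iterating it $n$ times, using $\rho_{i+1}\cdots\rho_{i+n}=\gamma$ and $u_{i+n}=u_i$, gives precisely your displayed identity, indices included.
For the normalization, write $S_i:=\sum_{m=0}^{n-1}\prod_{j=1}^{m}\rho_{i+j}$, so that $\pi_i=\frac{J}{1-\gamma}(1+\rho_i)S_i$. Then $\sum_i S_i=n+\sum_{k=1}^{n-1}P_k(I_n,\rho)$, and after a cyclic shift of the summation index $\sum_i\rho_iS_i=\sum_{k=1}^{n}P_k(I_n,\rho)$. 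Since $P_n(I_n,\rho)=n\gamma$, the total is $n(1-\gamma)+2\sum_{k=1}^nP_k(I_n,\rho)$, as you claimed.

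A few small points should be tidied in the write-up.
\begin{enumerate}
\item The increment $X_{m+1}-X_m$ is not a function of $Y_m$; only its conditional law is. For $n\le 2$ it is not even a function of $(Y_m,Y_{m+1})$. So the ergodic-theorem step should go through the decomposition $X_m=\sum_{t<m}(p_{Y_t}-q_{Y_t})+M_m$. Here $M_m$ is a martingale with bounded increments, so $M_m/m\to0$ almost surely, and the ergodic theorem for the finite irreducible chain $Y$ handles the first sum.
\item Positivity of $\pi$ comes from irreducibility (all $p_i\in(0,1)$), not from $\gamma<1$. What $\gamma<1$ buys is that you may divide by $1-\gamma$ and then read off $J>0$ from the identity.
\item For $n\in\{1,2\}$ the ``cycle'' has loops or parallel edges. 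The balance equations $\pi_j=\pi_{j-1}p_{j-1}+\pi_{j+1}q_{j+1}$, with indices mod $n$ on the periodic lift, still hold, so your constant-flux argument covers all $n\ge 1$.
\end{enumerate}
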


An immediate consequence of the invariance under the action of the dihedral group of $P_k$ is that $N(\vec{p})\leq \frac{n!}{2n}$ when $n\geq 3$. In fact, in \cite{HHR23} it is proved that, for Lebesgue almost every $\vec{p}\in (0,1)^n$ the number of distinct speeds $N(\vec{p})$ satisfies $N(\vec{p})=\tfrac{n!}{2n}$.

Note that the expression above for the speed is not unique, indeed we also have that $s(\vec{p})=\sum_{i=1}^n (2p_i-1)\pi_i$ where $(\pi_1,\dots,\pi_n)$ denotes the stationary distribution of the Markov chain $X_n^\circ = X_n\,\textup{ mod }n$. While it is expected for the speed to be invariant under rotations it is not immediate that it also is invariant under reflection. Using the formula above one can hope to find an ordering that minimizes the speed. To this end, we introduce the greedy permutation. 

\begin{definition}
Let $n\geq 1$, we define the greedy permutation $\sigma_g\in S_n$ to be 
\begin{equation*}
    \sigma_g=(1,3,5,\dots,2 \lfloor\tfrac{n+1}{2}\rfloor-1,2 \lfloor\tfrac{n}{2}\rfloor,\dots,6,4,2).
\end{equation*}
\end{definition}
For example, for $n=7$ we have $\sigma_g=(1,3,5,7,6,4,2)$. This kind of ordering has been called \textit{circular symmetric ordering} in \cite{Abr67} or \textit{Pendulum Arrangement} in \cite{CMT20}. In fact, the $(\sigma,k)$-product has been previously studied for $k=2$ in \cite{Abr67,Yu18} where it is proven that for $x_1<x_2<\dots<x_n$, $P_2(\sigma,\vec{x})\leq P_2(\sigma_g,\vec{x})$ and the functional extension of the inequality in \cite{AP20}. This is generalized up to $k\leq 3$ and $n-3\leq k \leq n$ in \cite{HHR23}, here we prove this statement for general $k$.

\begin{theorem}[Generalized circular rearrangement inequality]\label{TheoremGenCircRearIneq}
Let $n\geq 1$ and $1\leq k \leq n$ be integers. Let $ x_1\leq x_2\leq \dots \leq x_n$ be positive real numbers. For any permutation $\sigma \in S_n$ and $\sigma_g$ the greedy permutation of $S_n$, we have that
\begin{equation}\label{GenCircRearIneq}
    P_k(\sigma,\vec{x}) \leq P_k(\sigma_g,\vec{x}).
\end{equation}
\end{theorem}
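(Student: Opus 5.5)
We will argue by an exchange (reflection) argument on the circle, in the spirit of the classical proofs for $k=2$. By a limiting/continuity argument we may assume $x_1<x_2<\dots<x_n$. Fix $k$ and let $\sigma$ be a maximizer of $P_k(\cdot,\vec{x})$ over the finite set $S_n$. The goal is to show that, up to the dihedral action (under which $P_k$ is invariant), $\sigma$ can be transformed into $\sigma_g$ without decreasing $P_k$. Concretely, we place the values on the vertices of an $n$-cycle and prove the following \emph{reflection lemma}: for any axis of symmetry of the $n$-gon, splitting the positions into pairs $\{a,a'\}$ that are mirror images with the axis, if we replace the arrangement by the one in which, for each pair, the larger value is put on a fixed chosen side of the axis, then $P_k$ does not decrease. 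Iterating reflections over all axes forces the arrangement to be ``unimodal and balanced'' around the position of $x_n$, which is exactly the greedy (pendulum) arrangement $\sigma_g$ up to rotation and reflection.

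The key step is the reflection lemma. Let the axis separate the circle into two arcs $A$ and $A'$ with mirror map $a\mapsto a'$ (possibly with one or two fixed positions on the axis). Each $k$-window $W$ of consecutive positions is either contained in one side together with axis points, or crosses the axis. Group windows into pairs $\{W,W'\}$ where $W'$ is the mirror image of $W$. For a pair, write $W\cap A=U$, $W\cap A'=V'$ (mirror of some $V\subseteq A$); then $W'$ meets $A$ in $V$ and $A'$ in $U'$. Writing $y_a$ for the value at $a$ and $y_{a'}$ for its mirror, the contribution is $c\,(\prod_{U}y\prod_{V}y' + \prod_{V}y\prod_{U}y')$, with $c$ the product over fixed axis positions. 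For windows whose two arcs are nested ($V\subseteq U$ or $U\subseteq V$, which holds because windows are intervals and the axis is a symmetry), this factors as $c\prod_{V}(yy')\cdot(\prod_{U\setminus V}y+\prod_{U\setminus V}y')$. The product $yy'$ is unchanged by swapping within pairs, and $\prod_{S}y+\prod_S y'$ is maximized, among choices of which element of each pair goes to $A$, by putting all larger elements on the same side (rearrangement inequality for two sequences: $\prod\max+\prod\min\ge$ any mixed choice, since $(a-b)(c-d)\ge0$ generalizes multiplicatively via $\prod\max\cdot\prod\min=\prod y y'$ fixed and sum maximized when the two factors are most spread). Summing over pairs gives the lemma.

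The main obstacle is verifying the nestedness claim: a window crossing the axis near one end of the axis intersects $A$ and $A'$ in intervals adjacent to that axis point, which are mirror-nested, but for large $k$ a window may cross the axis at \emph{both} ends, giving $U$ and $V$ as unions of two intervals each at opposite ends. I would handle this by noting that the complement of a window is a window of length $n-k$, and nestedness of the complement (which crosses at most at one end when the window crosses at both) implies nestedness of $W\cap A$ vs.\ mirror. Finally, the combinatorial step: an arrangement fixed (up to the lemma's improvement) by all reflections must have, for every axis, larger values consistently on the side of $x_n$; showing this characterizes $\sigma_g$ by induction, placing $x_n,x_{n-1},x_{n-2},\dots$ alternately on each side, completes the proof. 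Strict inequality with distinct values ensures a maximizer not of this form could be strictly improved unless all relevant terms vanish, so I would instead phrase it as: from any $\sigma$, a finite sequence of such reflections reaches $\sigma_g$ with $P_k$ nondecreasing.
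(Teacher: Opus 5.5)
Your route differs from the paper's. The paper inducts on $n$ by deleting the largest entry, and reduces the increment $D_k$ to the optimal arrangement of a linear consecutive-$(k-1)$-out-of-$(2k-2)$ reliability system, quoted from the literature. It then uses the $k\leftrightarrow n-k$ duality to get around the restriction $n\ge 2k-2$.

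Your reflection lemma is correct for every $k$ and for either choice of side. The ``main obstacle'' you flag is not one. An arc meeting the axis at both ends contains one of the two half-arcs entirely, so $U=A$ or $V=A$, and nestedness is automatic. Self-mirror windows contribute the invariant $c\prod_U yy'$.

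The genuine gap is the endgame. The criterion ``for every axis, larger values on the side of $x_n$'' does not characterize $\sigma_g$. Axes through the position of $x_n$ impose no condition under that rule, and that is exactly where the alternating tie-break lives. A counterexample: take $n=7$, $x_i=i$, and the cyclic arrangement $(1,4,5,7,6,3,2)$. For each of the six axes not through $7$, every mirror pair has its larger element on the side of $7$. Yet $P_2=127<129=P_2(\sigma_g,\vec x)$.

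Two further problems remain:
\begin{itemize}
\item You cannot conclude from a strict increase of $P_k$, because polarization often leaves it unchanged (always, for $k=1$ and $k=n$).
\item ``A finite sequence of reflections reaches $\sigma_g$'' is asserted with no mechanism.
\end{itemize}

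One fix is to use a generic reference point and a potential function.
\begin{enumerate}
\item View the positions on $\mathbb{R}/n\mathbb{Z}$ and fix $\xi=\tfrac14$. This point lies on no axis, since every axis meets the circle in points of $\tfrac12\mathbb{Z}$.
\item The greedy arrangement $g$ is exactly the one that strictly decreases in circular distance to $\xi$. Positions $0,1,-1,2,-2,\dots$ have distances $\tfrac14,\tfrac34,\tfrac54,\dots$.
\item Hence, for every reflection $R$, $g_p>g_{Rp}$ for all $p$ in the half $H_R$ containing $\xi$.
\item If $y\neq g$, choose $p,q$ with $g_p>g_q$ and $y_p<y_q$. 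The reflection $t\mapsto p+q-t$ swaps them, and it has $p\in H_R$.
\item Polarizing $y$ toward $H_R$ does not decrease any $P_k$, by your lemma. It strictly increases $\Phi(y)=\sum_p y_pg_p$, by two-term rearrangement on each mirror pair, with strict gain on $\{p,q\}$.
\item So no arrangement repeats, the process terminates, and it can only stop at $g$.
\end{enumerate}

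With this addition your argument is complete and self-contained. It needs neither the reliability result, nor the duality lemma, nor induction on $n$. It also gives slightly more than the paper: a single chain of polarizations from $\sigma$ to $\sigma_g$ is monotone for all $P_k$ at once.
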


The statement above settles conjecture $5$ in \cite{HHR23}. Additionally, equation (\ref{eqSpeedy}) implies that a permutation maximizing the circular rearrangement products would minimize the speed. Hence conjecture $8$ in \cite{HHR23} follows immediately from Theorem \ref{TheoremGenCircRearIneq}. 

\begin{theorem}[Greedy is least speedy]
For any $\vec{p}$ with non-increasing entries, such
that $\gamma < 1$ (so all possible speeds will be positive), and every $\sigma\in S_n$ we have that
\begin{equation*}
    s\big(\vec{p}_{\sigma_g}\big)\leq s\big(\vec{p}_{\sigma}\big).
\end{equation*}
Here $\vec{p}_\sigma = (p_{\sigma(1)},p_{\sigma(2)},p_{\sigma(3)},\dots,p_{\sigma(n-1)},p_{\sigma(n)})$.
\end{theorem}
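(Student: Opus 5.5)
The plan is to deduce the statement directly from Theorem \ref{TheoremHHR} and Theorem \ref{TheoremGenCircRearIneq}. Write $\rho_i=\frac{1-p_i}{p_i}$. Since $p\mapsto \frac{1-p}{p}$ is strictly decreasing on $(0,1)$ and $\vec{p}$ has non-increasing entries, the vector $\rho=(\rho_1,\dots,\rho_n)$ has non-decreasing positive entries $\rho_1\le\rho_2\le\dots\le\rho_n$. This is exactly the ordering hypothesis of Theorem \ref{TheoremGenCircRearIneq}. Rearranging $\vec{p}$ by $\sigma$ rearranges $\rho$ by the same $\sigma$. The quantity $\gamma=\prod_i\rho_i$ is a symmetric function, so it does not depend on $\sigma$. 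In particular $\gamma<1$ holds for every rearrangement, and Theorem \ref{TheoremHHR} applies to each $\vec{p}_\sigma$.

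Next, apply formula (\ref{eqSpeedy}) to $\vec{p}_\sigma$. The vector $\rho$ associated with $\vec{p}_\sigma$ is $(\rho_{\sigma(1)},\dots,\rho_{\sigma(n)})$. I will check from the definition that $P_k(I_n,(\rho_{\sigma(i)})_i)=P_k(\sigma,\rho)$; this is just reindexing, using the periodic extension of $\sigma$. This gives
\begin{equation*}
s(\vec{p}_\sigma)=\frac{1-\gamma}{1-\gamma+\frac{2}{n}\sum_{k=1}^n P_k(\sigma,\rho)}.
\end{equation*}
The numerator $1-\gamma$ is positive and does not depend on $\sigma$. The function $t\mapsto \frac{1-\gamma}{1-\gamma+t}$ is decreasing for $t>0$. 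Hence it suffices to show
\begin{equation*}
\sum_{k=1}^n P_k(\sigma,\rho)\le \sum_{k=1}^n P_k(\sigma_g,\rho).
\end{equation*}
This follows by summing the inequality (\ref{GenCircRearIneq}) of Theorem \ref{TheoremGenCircRearIneq} over $k=1,\dots,n$ with $\vec{x}=\rho$.

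All of the real difficulty lies in Theorem \ref{TheoremGenCircRearIneq}, which I take as given. Once it is available, the only point needing care is the bookkeeping between $\vec{p}_\sigma$ and $P_k(\sigma,\rho)$. The convention for $\sigma_g$ may make the arrangement of $\vec{p}_{\sigma_g}$ differ from the paper's $\sigma_g$ by an inversion or a shift. I expect this not to matter, for two reasons. First, $P_k$ is invariant under rotations and reflections of the arrangement. Second, the greedy cycle written as a sequence of indices is the same pendulum pattern either way. So I would verify that $\vec{p}_{\sigma_g}$, read cyclically, places the $\rho_i$ in the pendulum arrangement up to the dihedral action. Then the correct greedy product appears on the right-hand side.
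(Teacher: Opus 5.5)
Your proposal is correct and follows the paper's own argument. The paper deduces the statement immediately from the speed formula (\ref{eqSpeedy}) and Theorem \ref{TheoremGenCircRearIneq}, observing that simultaneously maximizing every $P_k$ minimizes the speed; your remaining steps (that $p\mapsto (1-p)/p$ is decreasing, that $\gamma$ is permutation-invariant, and the reindexing $P_k(I_n,\rho_\sigma)=P_k(\sigma,\rho)$) are the bookkeeping that one-line deduction leaves implicit, and your worry about the $\sigma_g$ convention is harmless since $\vec{p}_\sigma$ and $P_k(\sigma,\cdot)$ index through $\sigma$ in the same way.
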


Note that one might interpret this statement as saying that the speed is minimized by having a "smooth" ordering, that is, a cyclic arrangement of the elements of $\vec{p}$ that has no large jumps.


The proof of Theorem \ref{TheoremGenCircRearIneq} combines an induction argument based on deleting the largest entry, a symmetry between products of lengths $k$ and $n-k$, and a probabilistic interpretation of an algebraic expression. 

The first ingredient follows \cite{Abr67,Yu18, HHR23} where for fixed $k$ an inductive argument is done in $n$. By decomposing $P_k(\sigma,\vec{x})=P_k(\sigma',\vec{x}')+D_k$ we study the quantity $D_k$. Note that the proof in \cite{Yu18} contains an algebraic error. The second ingredient consists in exploiting the symmetric structure of $P_k$ to replace the induction argument on $n$ with a structural induction argument on both $n$ and $k$. The third ingredient consists in giving a probabilistic interpretation to the quantity $D_k$, this allows us to apply a result of \cite{JHCH05}.

\begin{remark}
Similarly to the problem of minimizing the speed, we would like to find a universal permutation $\sigma \in S_n$ determined only by the ordering of $\vec{p}$ that maximizes the speed. This is unfortunately not possible. First, the circular rearrangement products $P_k$ do not share a common permutation that minimize them simultaneously. One may still hope that rather than minimizing $P_k$ to find the maximal speed, one could attempt to find a permutation minimizing $\sum_{k=1}^n P_k(\sigma,\rho)$. Indeed, starting from $\vec{p}=\left(\frac{4}{5},\frac{2}{3},\frac{4}{7},\frac{1}{2},\frac{4}{9},\frac{2}{5}\right)$ the maximal speed is given for $\sigma_1=(1,5,3,4,2,6)$, for which $s(\vec{p}_{\sigma_1})=\frac{633}{4894}\approx0.129$. While for $\vec{p}'=\left(\frac{6}{7},\frac{3}{4},\frac{2}{3},\frac{3}{5},\frac{3}{8},\frac{3}{53}\right)$, the maximal speed is given for $\sigma_2=(1,4,5,3,2,6)$, for which $s(\vec{p}_{\sigma_2}')=\frac{708}{27953}\approx 0.025$. A direct examination of the $60$ arrangements up to dihedral symmetry shows that the two optimizers are unique. Hence the permutation maximizing the speed cannot be only determined by the order of the transition probabilities.
\end{remark}

\section{Proof of Theorem \ref{TheoremGenCircRearIneq}}

Denote by $\mathcal{D}$ the set of pairs $(n,k)$ with $n\geq 1$ and $1\leq k \leq n$ such that the inequality \ref{GenCircRearIneq} is satisfied for all $\sigma\in S_n$ and $0<x_1<x_2<\dots<x_n$. It is immediate from the definition that $(n,1)\in \mathcal{D}$ and that $(n,n)\in \mathcal{D}$ for all $n\geq 1$.

\begin{lemma}\label{OldLemma}
If $(n,k)\in \mathcal{D}$ and $1\leq k \leq n-1$, then $(n,n-k)\in \mathcal{D}$. 
\end{lemma}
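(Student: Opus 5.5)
The plan is to exploit the complementary structure of a circular window: a block of $k$ consecutive positions and the block of the remaining $n-k$ positions together cover the whole circle. Put $\Pi:=\prod_{i=1}^n x_i$, which does not depend on $\sigma$. For each starting index $i$, the window $\{\sigma(i+1),\dots,\sigma(i+k)\}$ has as complement exactly the window $\{\sigma(i+k+1),\dots,\sigma(i+n)\}$ of length $n-k$. Hence
\begin{equation*}
\prod_{j=1}^{k} x_{\sigma(i+j)} \;=\; \Pi \prod_{j=k+1}^{n} x_{\sigma(i+j)}^{-1}.
\end{equation*}
Summing over $i$ and reindexing the cyclic sum by $i\mapsto i+k$, I get the identity
\begin{equation*}
P_k(\sigma,\vec{x}) \;=\; \Pi\, P_{n-k}(\sigma,\vec{y}), \qquad y_i := 1/x_i .
\end{equation*}
This shows that the length-$k$ problem for $\vec{x}$ is the length-$(n-k)$ problem for the reciprocal vector.

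The reciprocal vector is ordered the other way: $y_1>y_2>\dots>y_n$. To use the hypothesis $(n,k)\in\mathcal{D}$, I relabel so that the entries increase. Set $z_i:=y_{n+1-i}=1/x_{n+1-i}$, so $0<z_1<\dots<z_n$, and let $\tau(m)=n+1-m$ be the reversal of labels. Then $y_{\sigma(l)}=z_{\tau\sigma(l)}$, and therefore $P_{m}(\sigma,\vec{y})=P_m(\tau\circ\sigma,\vec{z})$ for every $m$. As $\sigma$ ranges over $S_n$, so does $\tau\circ\sigma$. Combining this with the identity above, $\max_\sigma P_{n-k}(\sigma,\vec{x})$ is attained exactly where $P_k(\,\cdot\,,\vec{z})$ is maximized, after applying the complement identity with the roles of $k$ and $n-k$ swapped:
\begin{equation*}
P_{n-k}(\sigma,\vec{x}) = \Pi\,P_k(\sigma,\vec{y}) = \Pi\,P_k(\tau\circ\sigma,\vec{z}).
\end{equation*}
Since $(n,k)\in\mathcal{D}$ and $\vec{z}$ is strictly increasing, we have $P_k(\tau\circ\sigma,\vec{z})\le P_k(\sigma_g,\vec{z})$. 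It remains to write $\sigma_g=\tau\circ\sigma^\ast$ with $\sigma^\ast=\tau\circ\sigma_g$. This gives $P_{n-k}(\sigma,\vec{x})\le \Pi\,P_k(\sigma_g,\vec z)=P_{n-k}(\tau\circ\sigma_g,\vec{x})$.

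The main point to check is that $\tau\circ\sigma_g$ is equivalent to $\sigma_g$ under the dihedral action, since $P_{n-k}$ is dihedrally invariant. The cyclic sequence $\sigma_g=(1,3,5,\dots,6,4,2)$ places $1$ and spreads values outward alternately. Applying $m\mapsto n+1-m$ gives the cyclic sequence $(n,n-2,n-4,\dots,n-3,n-1)$, which is a pendulum arrangement centered at $n$. I will verify this in both parity cases. For $n$ odd, $\sigma_g$ read cyclically is $\dots,4,2,1,3,5,\dots,n,n-1,n-3,\dots$, so $n$ sits between $n-2$ and $n-1$, and the sequence moves away from $n$ alternately on both sides. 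Reading it starting from $n$ shows it is a rotation or reflection of $\tau\circ\sigma_g$. For $n$ even, the same check applies, with the top of the circle being $n-1,n$ adjacent. Concretely, I will show that in both $\sigma_g$ and $\tau\circ\sigma_g$ the neighbours of each $m$ are exactly $\{m-2,m+2\}$, with the obvious modifications at the ends ($1$ adjacent to $2,3$ and $n$ adjacent to $n-1,n-2$). A cyclic arrangement is determined up to dihedral symmetry by its adjacency graph, so the two arrangements coincide up to the dihedral action.

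With that symmetry established, $P_{n-k}(\tau\circ\sigma_g,\vec x)=P_{n-k}(\sigma_g,\vec x)$, which proves $(n,n-k)\in\mathcal{D}$. I expect the only delicate part to be the bookkeeping of the reindexing in the complement identity and this adjacency check for the greedy permutation. The algebraic identity itself is immediate.
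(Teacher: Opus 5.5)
Your proposal is correct and follows the paper's route: the paper also rests on the complement identity $P_{n-k}(\sigma,\vec{x})=P_k(\sigma,x_1^{-1},\dots,x_n^{-1})\prod_{i=1}^n x_i$ (credited to \cite{HHR23}). Your relabeling $m\mapsto n+1-m$, together with the adjacency check, supplies the step the paper leaves implicit: the greedy cycle's edge set $\{m,m+2\}$ ($1\le m\le n-2$) plus $\{1,2\}$ and $\{n-1,n\}$ is invariant under this relabeling, so $\tau\circ\sigma_g$ is a dihedral image of $\sigma_g$.
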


The lemma above is present in \cite[Section 5]{HHR23}. It follows from noticing that $P_{n-k}(\sigma,\vec{x})=P_k(\sigma,x_1^{-1},\dots,x_n^{-1})\prod_{i=1}^n x_i$, which induces a symmetry between $P_k$ and $P_{n-k}$. One of our contributions is the lemma below.

\begin{lemma}\label{NewLemma}
If $(n,k)\in \mathcal{D}$, $k\geq 2$ and $n\geq 2k-2$, then $(n+1,k)\in \mathcal{D}$. 
\end{lemma}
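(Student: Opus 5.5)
The plan is induction by deleting the largest entry. Fix $0<x_1<\dots<x_{n+1}$ and $\sigma\in S_{n+1}$. Rotate so that $x_{n+1}$ sits at position $0$. Let $a_1,\dots,a_{k-1}$ be the $k-1$ entries to its right and $b_1,\dots,b_{k-1}$ the $k-1$ entries to its left, read outward. Write $w=(b_{k-1},\dots,b_1,a_1,\dots,a_{k-1})$; these positions are distinct because $n+1\geq 2k-1$. Deleting $x_{n+1}$ gives a circular arrangement $\sigma'$ of $x_1,\dots,x_n$, and
\[
P_k(\sigma,\vec x)=P_k(\sigma',\vec x')+D_k(\sigma),\qquad D_k=x_{n+1}\sum_{j=0}^{k-1} b_1\cdots b_j\,a_1\cdots a_{k-1-j}-\sum_{j=1}^{k-1} b_1\cdots b_j\,a_1\cdots a_{k-j}.
\]
The first sum collects the $k$ windows through $x_{n+1}$. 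The second sum collects the $k-1$ windows of $\sigma'$ that straddle the deletion point. By the hypothesis $(n,k)\in\mathcal D$ we have $P_k(\sigma',\vec x')\leq P_k(\sigma_g^{(n)},\vec x')$. Deleting $x_{n+1}$ from $\sigma_g^{(n+1)}$ yields exactly $\sigma_g^{(n)}$, since $n+1$ sits at the top of the pendulum. It therefore suffices to show that $D_k(\sigma)\leq D_k(\sigma_g^{(n+1)})$. In the greedy arrangement the neighbours of $x_{n+1}$ are $x_n,x_{n-1},\dots,x_{n-2k+3}$, placed alternately on the two sides with the larger ones closer.

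The first step is a reduction. $D_k$ depends only on the multiset and arrangement of the $2k-2$ neighbour values. Replacing the window entries by the $2k-2$ largest remaining values should only increase $D_k$. To see this, write $D_k$ as $\sum_j b_1\cdots b_j\,a_1\cdots a_{k-1-j}\,(x_{n+1}-a_{k-j})$ plus the term $j=k-1$, arranging the terms so that the coefficient of each variable is visibly nonnegative because $x_{n+1}$ dominates every entry. Here $n\geq 2k-2$ guarantees that $2k-2$ values besides $x_{n+1}$ are available. After this reduction the problem becomes: among arrangements of $y_1<\dots<y_{2k-2}$ (all less than $z=x_{n+1}$) in the line $w$, maximize
\[
D(w)=z\sum_{j=0}^{k-1}\pi_j-\sum_{j=1}^{k-1}\pi'_j,
\]
where $\pi_j,\pi'_j$ denote the products over the contiguous length-$(k-1)$ and length-$k$ subwords of $w$ containing the centre. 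The hope is that the alternating (pendulum) arrangement is optimal.

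The second step is the probabilistic interpretation. Dividing by $z^{k}$ and setting $q_i=y_i/z\in(0,1)$, I read
\[
\frac{\pi_j}{z^{k-1}}-\frac{\pi'_{j+1}}{z^{k}}
\]
as the probability that a sequence of independent Bernoulli trials, spreading outward from the centre with success probabilities $q$, has successes on a prescribed block of length $k-1$ followed by a failure. Summing over $j$ then expresses $D/z^k$ as the probability of an event of the form ``the centred run of successes, read in both directions, has length at least $k-1$'', up to boundary corrections. Maximizing such a run probability over orderings of the $q_i$ is a statement about longest or centred runs in independent non-identical Bernoulli trials. I would invoke the result of \cite{JHCH05} that such probabilities are maximized by the symmetric unimodal (pendulum) placement of the success probabilities, which is precisely the greedy arrangement of the window.

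The main obstacle is making this identification exact. The telescoped expression must match an event covered by \cite{JHCH05}, typically ``there is a run of $k$ consecutive successes in a linear array containing a fixed centre site with probability $1$''. The signs and the endpoint terms in the $j=0$ and $j=k-1$ sums have to be checked carefully. The monotonicity reduction of the first step also needs care: if it fails termwise, I would instead fold it into the probabilistic picture, since raising any $q_i$ increases a monotone event's probability.
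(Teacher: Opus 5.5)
Your proposal is correct and follows essentially the same route as the paper. You delete the largest entry, apply the induction hypothesis to $P_k(\sigma')$, use coordinatewise monotonicity of $D_k$ to pass to the $2k-2$ largest remaining values (replacing the $\ell$-th smallest window value by the $\ell$-th smallest of those, so $a_\ell\le y_\ell$), and recognize $D_k/x_{n+1}^k$ as the reliability of a linear consecutive-$(k-1)$-out-of-$(2k-2)$:G system, so that \cite{JHCH05} applies. Two details you left open resolve cleanly. First, the identification is exact: your telescoped terms are the disjoint events that the leftmost all-success $(k-1)$-window is block $j$, and the lone term $\pi_{k-1}/z^{k-1}$ is the only boundary term. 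Second, a single grouping of $D_k$ only makes the coefficients of the variables on one side of $x_{n+1}$ visibly nonnegative, so you should use the mirror grouping for the other side (the paper does this via reflection symmetry) or your probabilistic fallback, which works.
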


We postpone the proof of the lemma to provide a short proof of Theorem \ref{TheoremGenCircRearIneq}. Note that it is enough to show that $(n,k) \in \mathcal{D}$ for any $1\leq k\leq n$ and $n\geq 1$ to conclude. 

\begin{proof}[Proof of Theorem \ref{TheoremGenCircRearIneq}]
We first show by induction on $k \ge 1$ that $(n,k) \in \mathcal{D}$ for all $n \ge 2k-1$. The base case $k=1$ holds directly from the given conditions. Assuming the claim holds for fixed $k$, we have $(2k+1, k) \in \mathcal{D}$, hence Lemma \ref{OldLemma} then gives $(2k+1, k+1) \in \mathcal{D}$. An iterated application of  Lemma \ref{NewLemma} gives $(n, k+1) \in \mathcal{D}$ for all $n \ge 2(k+1)-1$, completing the main induction. Now consider any pair $(n,k)$ with $1 \le k \le n$,
\begin{itemize}
    \item if $k \le n/2$, then $n > 2k-1$, and we have $(n,k) \in \mathcal{D}$.
    \item If $n>k > n/2$, then $(n,n-k) \in \mathcal{D}$ by the item above, and Lemma \ref{OldLemma} gives $(n, k) \in \mathcal{D}$.
    \item If $k=n$ then $(n,k)\in\mathcal{D}$ as a base case. \qedhere
\end{itemize}\end{proof}

\begin{definition}
Let $k\geq 2$, $0<a_1,\dots,a_{2k-2}<x$ real numbers and $\tau \in S_{2k-2}$. We define the $(\tau,k)$-difference to be 
\begin{equation*}
D_k(\tau,x,\vec{a}):=\sum_{i=1}^{k} x\prod_{j=0}^{k-2} a_{\tau(i+j)}-\sum_{i=1}^{k-1} \prod_{j=0}^{k-1}a_{\tau(i+j)}.
\end{equation*}
\end{definition}

\begin{lemma}\label{LemmaDincreasing}
For fixed $k$, $x$, $\vec{a}=(a_1,a_2,\dots,a_{2k-2})$ and $\tau\in S_{2k-2}$, we have that $D_k(\tau,x,\vec{a})$ is increasing in each coordinate $a_\ell$ for $1\leq \ell \leq 2k-2$.
\end{lemma}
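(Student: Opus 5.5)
Write $b_m:=a_{\tau(m)}$ for $1\le m\le 2k-2$. In the definition of $D_k$ every index satisfies $i+j\le k+(k-2)=2k-2$, so there is no wrap-around, and
\begin{equation*}
D_k(\tau,x,\vec a)=x\sum_{i=1}^{k}\prod_{t=i}^{i+k-2}b_t-\sum_{i=1}^{k-1}\prod_{t=i}^{i+k-1}b_t .
\end{equation*}
This is a sum over the $k$ windows of length $k-1$, each weighted by $x$, minus a sum over the $k-1$ windows of length $k$, in the linear sequence $b_1,\dots,b_{2k-2}$. Since $\tau$ is a fixed bijection, monotonicity in each $a_\ell$ is the same as monotonicity in each $b_m$. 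The function $D_k$ is affine in each $b_m$ separately, so my plan is to show that its partial derivative $\partial D_k/\partial b_m$ is positive. That derivative equals
\begin{equation*}
x\sum_{W\ni m,\ |W|=k-1}\ \prod_{t\in W\setminus\{m\}}b_t\;-\;\sum_{V\ni m,\ |V|=k}\ \prod_{t\in V\setminus\{m\}}b_t ,
\end{equation*}
where $W$ and $V$ range over the windows of the respective lengths that contain $m$.

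The key step is to match each length-$k$ window $V$ containing $m$ injectively to a length-$(k-1)$ window $W\subset V$ containing $m$. This is done by deleting an endpoint $e$ of $V$ with $e\neq m$. For such a pair, $\prod_{V\setminus\{m\}}=b_e\prod_{W\setminus\{m\}}$, so the pair contributes $(x-b_e)\prod_{W\setminus\{m\}}b_t>0$, because $b_e<x$ and all entries are positive. Any unmatched $W$ contributes a further positive term. Hence the derivative is positive whenever such an injection exists.

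To build the injection I split into two cases.
\begin{itemize}
\item If $m\le k-1$, the length-$k$ windows containing $m$ are those starting at $i=1,\dots,m$. For each of these the right endpoint $i+k-1\ge k>m$, so deleting it never deletes $m$. The result is the length-$(k-1)$ window starting at $i$, which still contains $m$ because $i\le m$. The map $i\mapsto i$ is clearly injective. In fact the two families both have exactly $m$ members, so it is a bijection.
\item If $m\ge k$, reversing the sequence ($t\mapsto 2k-1-t$) maps windows to windows of the same length and sends $m$ to $2k-1-m\le k-1$. So this case is the previous one with the left endpoint deleted instead.
\end{itemize}

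I expect the only real obstacle to be this bookkeeping: choosing which endpoint to remove uniformly, so that the map is injective. A naive rule (for example, always deleting the left endpoint unless it equals $m$) produces collisions. The case split at $m\le k-1$ versus $m\ge k$ avoids this, and it is exactly where the length $2k-2$ of the sequence is used.
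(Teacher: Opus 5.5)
Your proof is correct and is essentially the paper's argument. The paper also passes to $b_\ell=a_{\tau(\ell)}$, uses the reversal symmetry to reduce to $\ell\le k-1$, and writes $\partial_{b_\ell}D_k=\sum_{i=1}^{\ell}(x-b_{k-1+i})\prod_{j=i,\,j\neq\ell}^{i+k-2}b_j>0$, which is exactly your pairing of the length-$k$ window starting at $i$ with the length-$(k-1)$ window starting at $i$.
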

\begin{proof}
Start by writing $b_\ell=a_{\tau(\ell)}$ for $1\leq \ell \leq 2k-2$, and note that for $\vec{b}'=(b_{2k-2},b_{2k-3},\dots,b_2,b_1)$ we have that $D_k(\tau,x,\vec{a})=D_k(I_{2k-2},x,\vec{b})=D_k(I_{2k-2},x,\vec{b}')$. Hence it is enough to verify that $\partial_{b_\ell} D_k(I_{2k-2},x,\vec{b})\geq 0$ for $1\leq \ell \leq k-1$. Indeed, we have that
\begin{equation*}
    \partial_{b_\ell} D_k(I_{2k-2},x,\vec{b})=\sum_{i=1}^\ell (x-b_{k-1+i})\prod_{\substack{j=i\\j\neq \ell}}^{i+k-2} b_{j}>0. \qedhere
\end{equation*}
\end{proof}

\begin{lemma}\label{LemmaDisGreedy}
Let $k\geq 2$, $0<a_1<\dots<a_{2k-2}<x$ real numbers and $\tau \in S_{2k-2}$. Then
\begin{equation*}
    D_k(\tau,x,\vec{a}) \leq D_k(\tau_g,x,\vec{a}).
\end{equation*}
\end{lemma}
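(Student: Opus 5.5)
The plan is to recognize $D_k(\tau,x,\vec a)$, after normalization, as the reliability of a linear consecutive-$(k-1)$-out-of-$(2k-2)$:G system. The lemma should then follow from the known optimal arrangement of such systems when the number of components is at most twice the window length, which is the result of \cite{JHCH05}.

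\textbf{Probabilistic model.} Write $b_\ell=a_{\tau(\ell)}$ and $q_\ell=b_\ell/x\in(0,1)$. Since each term of the first sum of $D_k$ has $k-1$ factors of $a$ and each term of the second sum has $k$ factors, factoring out $x^k$ gives
\begin{equation*}
D_k(\tau,x,\vec a)=x^k\Big(\sum_{i=1}^{k}\prod_{j=0}^{k-2}q_{i+j}-\sum_{i=1}^{k-1}\prod_{j=0}^{k-1}q_{i+j}\Big).
\end{equation*}
Now take independent components $1,\dots,2k-2$ on a line, with component $\ell$ working with probability $q_\ell$. Let $A_i$ be the event that components $i,\dots,i+k-2$ all work, for $1\le i\le k$; these are exactly the windows of length $k-1$ inside $\{1,\dots,2k-2\}$.

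\textbf{Identifying the bracket.} I claim the bracket equals $\mathbb P(\bigcup_i A_i)$, i.e.\ the reliability of the consecutive-$(k-1)$-out-of-$(2k-2)$:G line. Decompose the union according to the first window that occurs:
\begin{equation*}
\bigcup_i A_i=A_1\ \sqcup\ \bigsqcup_{i=2}^{k}\big(A_i\cap\{\text{component } i-1 \text{ fails}\}\cap(A_1\cup\dots\cup A_{i-1})^c\big).
\end{equation*}
The step to check is that the last intersection is superfluous. If $A_i$ holds and component $i-1$ fails, then any $A_j$ with $j<i$ would need a run of length $k-1$ of working components ending before position $i-1$. That run would use positions $j,\dots,j+k-2$ with $j+k-2\le i-2\le k-2$, which forces $j\le 0$, a contradiction. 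Hence
\begin{equation*}
\mathbb P\Big(\bigcup_i A_i\Big)=\prod_{j=1}^{k-1}q_j+\sum_{i=2}^{k}(1-q_{i-1})\prod_{j=i}^{i+k-2}q_j,
\end{equation*}
and expanding $(1-q_{i-1})$ reproduces the bracket exactly. So $D_k(\tau,x,\vec a)=x^k R_\tau$, where $R_\tau$ is the reliability of this G-line with component reliabilities $q_{\tau(\ell)}$ placed in the order dictated by $\tau$.

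\textbf{Applying \cite{JHCH05} (main obstacle).} The number of components is $2k-2=2(k-1)$, which lies in the regime $n\le 2k'$ with window length $k'=k-1$. For this regime \cite{JHCH05} give the invariant optimal arrangement of a Lin/Con/$k'$/$n$:G system. It depends only on the ordering of the reliabilities: the least reliable components sit at the two ends and reliabilities increase toward the middle. This should be precisely the greedy order $\tau_g=(1,3,5,\dots,6,4,2)$ read along the line.

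I expect the main obstacle to be matching conventions: whether their arrangement is stated in terms of failure or working probabilities, the indexing of the ends, and whether their hypotheses require strict inequalities, which we have here since $0<a_1<\dots<a_{2k-2}<x$. If the cited optimum is phrased for the dual F-system, I would pass to complements $q_\ell\mapsto 1-q_\ell$. If only a weaker statement is available, such as optimality via adjacent swaps, I would fall back on a direct exchange argument. In that argument, swapping two positions symmetric about the center changes $D_k$ by a factor $(b_s-b_t)$ times a nonnegative expression. Nonnegativity would be controlled as in the proof of Lemma~\ref{LemmaDincreasing}, using $x>b_\ell$ to keep each term of the form $(x-b)\cdot\prod b$ positive. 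Reaching $\tau_g$ by a sequence of such swaps, each of which does not decrease $D_k$, would then give $D_k(\tau,x,\vec a)\le D_k(\tau_g,x,\vec a)$.
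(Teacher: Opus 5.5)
Your proposal is correct and is essentially the paper's proof. You normalize to $D_k(\tau,x,\vec a)=x^kD_k(\tau,1,\vec q)$, identify $D_k(\tau,1,\vec q)$ as the reliability of a linear consecutive-$(k-1)$-out-of-$(2k-2)$:G system (your first-working-window check of this identity is more explicit than the paper's), and invoke \cite{JHCH05} in the regime $n\le 2k'$, where the optimal arrangement is exactly $\tau_g$. The fallback exchange argument is not needed, and as sketched it would not stand on its own: it uses only center-symmetric swaps and never establishes the sign of the increment.
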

\begin{proof}
Start by writing $q_i=a_i/x$ for $i=1,\dots,2k-2$. We then have that
\begin{equation*}
D_k(\tau,x,\vec{a})=x^k \bigg(\sum_{i=1}^{k}\prod_{j=0}^{k-2} q_{\tau(i+j)}-\sum_{i=1}^{k-1} \prod_{j=0}^{k-1}q_{\tau(i+j)}\bigg)=x^kD_k(\tau,1,\vec{q}).
\end{equation*}
Hence, it is enough to maximize $D_k(\tau,1,\vec{q})$. Interestingly, the quantity $D_k(\tau,1,\vec{q})$ has a probability interpretation. Consider a system with $2k-2$ independent components in the order $\big(1,2,3,\dots,2k-3,2k-2\big)$ where $q_{\tau(i)}$ is the probability that the component occupying position $i$ functions. We say that the system functions if at least $k-1$ consecutive components function, then 
\begin{equation*}
D_k(\tau,1,\vec{q})=\prod_{j=1}^{k-1}q_{\tau(j)}+\sum_{i=1}^{k-1}(1-q_{\tau(i)})\prod_{j=1}^{k-1}q_{\tau(i+j)}
\end{equation*}
is precisely the probability that the system functions, also called \textit{reliability} of the system. This kind of systems has been previously studied, it is shown in \cite{JHCH05} that $D_k(\tau,1,\vec{q})\leq D_k(\tau_g,1,\vec{q})$, which immediately provides the conclusion.
\end{proof}

\begin{remark}
Note that the proof that $D_k(\tau,1,\vec{q})\leq D_k(\tau_g,1,\vec{q})$ is elementary, by fixing a transposition $\theta_{i,j}=(i\,j)$, the quantity $D_k(\tau,1,\vec{q})-D_k(\tau\circ\theta_{i,j},1,\vec{q})$ can be simplified. By studying different cases, the proof in \cite{JHCH05} concludes by means of an inductive argument. In fact, rearrangement problems for runs of independent components and their
applications to network reliability go back at least to \cite{Ton85}.
\end{remark}

We can now finish the proof of Lemma \ref{NewLemma} by means of lemmas \ref{LemmaDincreasing} and \ref{LemmaDisGreedy}.

\begin{proof}[Proof of Lemma \ref{NewLemma}]
Fix $k\geq 2$ and $n\geq 1$, $\sigma\in S_{n+1}$ and $\vec{x}\in (0,\infty)^{n+1}$, without loss of generality we can assume that $x_1<x_2<\dots<x_{n+1}$. By using the invariance under rotation of $P_k$ we can assume that $\sigma(n+1)=n+1$. Denote $x:=x_{n+1}$, $c_{k-j}=x_{\sigma(n+1-j)}$ and $c_{k-1+j}=x_{\sigma(j)}$ for $j=1,\dots,k-1$. Note that $\vec{c}$ denotes the $2k-2$ neighbors of $x_{n+1}$ in $P_k(\sigma,\vec{x})$. Let $\vec{a}=(a_1,a_2,\dots,a_{2k-2})$ be the increasing reordering of $\vec{c}$ and let $\tau\in S_{2k-2}$ be the permutation that satisfies $\vec{a}_\tau=(a_{\tau(1)},a_{\tau(2)},\dots,a_{\tau(2k-2)})=\vec{c}$, then we can write
\begin{equation}\label{eq1}
    P_k(\sigma,\vec{x})=P_k(\sigma',\vec{x}')+D_k(\tau,x_{n+1},\vec{a}),
\end{equation}
where $\vec{x}'$ denotes $\vec{x}$ without the $n+1$ coordinate and $\sigma'$ is the permutation $\sigma$ restricted to $S_{n}$. Equivalently, by considering a greedy permutation $\sigma_g$ satisfying $\sigma_g(n+1)=n+1$, then we can write
\begin{equation}\label{eq2}
P_k(\sigma_g,\vec{x})=P_k(\sigma'_g,\vec{x}')+D_k(\tau_g,x_{n+1},\vec{y}).
\end{equation}
Here $\vec{y}$ denotes the vector of the $2k-2$ largest elements of $\vec{x}'$ in increasing order. Note that deleting $n+1$ in the greedy ordering $\sigma_g$ of $S_{n+1}$ still produces a greedy ordering $\sigma_g'$ of $S_n$. Similarly the permutation $\tau_g$ is also a greedy ordering on $S_{2k-2}$.

At this point we are left to compare equations (\ref{eq1}) and (\ref{eq2}). First, the induction hypothesis ensures that $P_k(\sigma',\vec{x}')\leq P_k(\sigma'_g,\vec{x}')$. Note that because $\vec{y}$ is the ordered set of variables next to $x_{n+1}$ in the greedy ordering, we automatically must have that $a_\ell\leq y_\ell$ for $1\leq \ell\leq 2k-2$. Additionally, the hypothesis that $n\geq 2k-2$ ensures that the $2k-2$ neighboring positions around $x_{n+1}$ are distinct, hence Lemma \ref{LemmaDincreasing} gives that $D_k(\tau_g,x_{n+1},\vec{a})\leq D_k(\tau_g,x_{n+1},\vec{y})$. Finally, Lemma \ref{LemmaDisGreedy} gives that $D_k(\tau,x_{n+1},\vec{a}) \leq D_k(\tau_g,x_{n+1},\vec{a})$. We conclude the proof of the statement for all $x_1\leq x_2\leq\dots \leq x_n\leq x_{n+1}$ by continuity of the function $P_k$. 
\end{proof}

\end{document}